\def\G{\Gamma}
\def\te{\theta} 
\def\l{\lambda}
\def\m{\mu}
\def\sou{\underline} 
\def\f{\rightarrow}
\def\<{\langle}
\def\>{\rangle}
\def\F{\displaystyle\frac}
\def\rd{\triangleright^*}
\def\sou{\underline}
\def\ou{\vee}
\def\et{\wedge}
\begin{document}
\begin{frontmatter}
  \title{ A semantics of realisability for the classical
propositional natural deduction}
 \author{Karim NOUR}
  \address{Equipe de Logique, Universit\'e de Savoie\\
    73376 Le Bourget du Lac, France} \author{Khelifa SABER}
  \address{Equipe de Logique, Universit\'e de Savoie\\
    73376 Le Bourget du Lac, France}  \thanks[myemail]{Email:
    \href{mailto:knour@univ-savoie.fr} {\texttt{\normalshape
        knour@univ-savoie.fr}}} \thanks[coemail]{Email:
    \href{mailto:ksabe@univ-savoie.fr} {\texttt{\normalshape
        ksabe@univ-savoie.fr}}}

\begin{abstract} 
 In this paper, we introduce a semantics of realisability for the
 classical propositional natural deduction and we prove a correctness
 theorem. This allows to characterize the operational behaviour of
 some typed terms.
\end{abstract}
\begin{keyword}
classical natural deduction, semantics of realisability, correctness theorem.
\end{keyword}
\end{frontmatter}

\section {Introduction}
 Natural deduction system is one of the main logical system which was
 introduced by Gentzen \cite{gen} to study the notion of proof. The
 full classical natural deduction system is well adapted for the human
 reasoning. By full we mean that all the connectives ($\f$, $\et$ and
 $\ou$) and $\perp$ (for the absurdity) are considered as primitive
 and they have their intuitionistic meaning. As usual, the negation is
 defined by $\neg A = A \f \perp$. Considering this logic from the
 computer science of view is interesting because, by the Curry-Howard
 correspondence, formulas can be seen as types for the functional
 programming languages and correct programs can be extracted. By this
  correspondence the corresponding calculus is an extension of the $\l
 \m$-calculus with product and co-product.

Until very recently (see the introduction of \cite{deG2} for a brief
history), no proof of the strong normalization of the cut-elimination
procedure was known for full logic. In \cite{deG2}, P. De Groote gives a
such proof for classical propositional natural deduction by using the
CPS-transformation. R. David and the first author give in \cite{dav2}
a direct and syntactical proof of this result. R. Matthes recently
found another semantical proof of this result (see \cite{matt}).

In order to prove the strong normalization of classical propositional
natural deduction, we introduce in \cite{Nour7} a variant of the
reducibility candidates, which was already present in
\cite{Par2}. This method has been introduced by J.Y. Girard. It consists in
associating to each type $A$ a set of terms $|A|$, such that every
term is in the interpretation of its type (this is called
``the adequation lemma''). To the best of our knowledge, we obtain the
shortest proof of this result.

In this paper, we define a semantics of realisability of classical
propositional natural deduction inspired by
\cite{Nour7} and we estabilish a correctness theorem. The idea 
is to replace the set of strongly normalizing terms used in the proof
presented in \cite{Nour7} by a set having the properties necessary to
keep the adequation lemma. This result allows to characterize the
operational behaviour of terms having some particular types.

 The paper is organized as follows. Section 2 is an introduction to
 the typed system and the relative cut-elimination procedure. In
 section 3, we define the semantics of realisability and we prove the
 correctness theorem. In section 4, we give some applications of this
 result.

\section{Notations and  definitions}
\begin{definition}
We use notations inspired by the paper \cite{and}. 
\begin{enumerate}
\item Let $\mathcal{X}$ and $\mathcal{A}$ be two disjoint alphabets
for distinguishing the $\lambda$-variables and $\mu$-variables
respectively.  We code deductions by using a set of terms
$\mathcal{T}$ which extends the $\l$-terms and is given by the
following grammars:
\begin{center}
$
\mathcal{T} \; := \;\mathcal{X} \; | \; 
\lambda\mathcal{X}.\mathcal{T}\;
|\; (\mathcal{T}\;\;\mathcal{E}) \; | \; \< \mathcal{T},\mathcal{T} \> 
\; | \;$$
\omega_1 \mathcal{T}$$ \; |\;$$ \omega_2 \mathcal{T}$$ \; | \;
\mu\mathcal{A}.\mathcal{T} \; | \; (\mathcal{A}\; \; \mathcal{T})
$

$
\mathcal{E} \; := \; \mathcal{T} \; | \; $$\pi_1$$ \; | \;$ $\pi_2 $$\; 
|
\; [\mathcal{X}.\mathcal{T},\mathcal{X}.\mathcal{T}]
$
\end{center}

An element of the set $\mathcal{E}$ is said to be an 
$\mathcal{E}$-term.

\item The meaning of the new constructors is given by the typing rules
below where $\G$ (resp. $\Delta$) is a context, i.e. a set of declarations of the form
$x : A$ (resp. $a :  A$) where $x$ is a $\l$-variable (resp. $a$
is a $\m$-variable) and $A$ is a formula.
 
\begin{center}
 $\F{}{\Gamma, x:A\,\, \vdash x:A\,\, ; \, \Delta}{ax}$
\end{center}

\begin{center}
$\F{\Gamma, x:A \vdash t:B;\Delta}{\Gamma \vdash \lambda x.t:A \to 
B;\Delta}{\to_i}
\quad\quad\quad 
\F{\Gamma \vdash u:A \to B;\Delta \quad \Gamma \vdash
v:A;\Delta}{\Gamma\vdash (u\; \; v):B;\Delta}{\to_e}$
\end{center}

\begin{center}
$\F{\Gamma \vdash u:A;\Delta \quad \Gamma \vdash v:B ; \Delta}{\Gamma
\vdash \<u,v\>:A \wedge B ; \Delta}{\wedge_i}$
\end{center}

\begin{center}
$\F{\Gamma \vdash t:A \wedge B ; \Delta}{\Gamma \vdash (t\;\;\pi_1):A ;
\Delta}{\wedge^1_e} 
\quad 
\F{\Gamma \vdash t:A\wedge B ; \Delta}{\Gamma \vdash (t\;\;\pi_2):B ; 
\Delta}{\wedge^2_e}$
\end{center}

\begin{center}
$\F{\Gamma\vdash t:A;\Delta}{\Gamma\vdash \omega_1
    t:A \vee B ;\Delta}{\vee^1_i}
\quad
\F{\Gamma \vdash t:B; \Delta}{\Gamma\vdash \omega_2 t:A\vee B 
;\Delta}{\vee^2_i}$
\end{center}

\begin{center}
$\F{\Gamma \vdash t:A\vee B ;\Delta\quad\Gamma, x:A \vdash u:C ;
\Delta\quad\Gamma, y:B \vdash v:C ; \Delta}{\Gamma \vdash (t\;\;[x.u, 
y.v]):C ; \Delta}{\vee_e}$ 
\end{center}

\begin{center}
$\F{\Gamma\vdash t:A ;\Delta, a:A}{\Gamma \vdash (a\;\;t):\bot ;\Delta,
a:A}{abs_i}
\quad
\F{\Gamma\vdash t:\bot; \Delta, a:A}{\Gamma \vdash\mu a.t:A; 
\Delta}{abs_e}$
\end{center}

\item The cut-elimination procedure corresponds to the  reduction rules 
given below. They are those we need to the subformula 
property.

\begin{itemize}
\item $(\lambda x.u \;\; v) \triangleright u[x:=v]$

\item $(\<t_1,t_2\>\;\;\pi_i) \triangleright t_i$

\item $(\omega_i t\;\;[x_1.u_1,x_2.u_2]) \triangleright  u_i[x_i:=t]$

\item $((t\;\;[x_1.u_1,x_2.u_2])\;\;\varepsilon) \triangleright
(t\;\;[x_1.(u_1\; \varepsilon),x_2.(u_2\;\varepsilon)])$

\item $(\m a.t\;\; \varepsilon) \triangleright \m
  a.t[a:=^*\varepsilon]$.
\end{itemize}

where $ t[a:=^*\varepsilon]$ is obtained from $t$ by replacing
inductively each subterm in the form $(a \; v)$ by $(a \; (v \; 
\varepsilon))$.

\item Let $t$ and $t'$ be $\mathcal{E}$-terms. The notation $t
\triangleright t'$ means that $t$ reduces to $t'$ by using one step
of the reduction rules given above. Similarly, $t \triangleright^* t'$
means that $t$ reduces to $t'$ by using some steps of the reduction
rules given above.
\end{enumerate}
\end{definition}

The following result is straightforward

\begin{theorem}(Subject reduction)
If $\G \vdash t : A ; \Delta$ and $t \triangleright^* t'$, then $\G 
\vdash t' : A ; \Delta$.
\end{theorem}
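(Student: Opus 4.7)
\medskip

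\noindent\textbf{Proof plan.} The plan is first to reduce the statement to the single-step case and then analyse each reduction rule. It suffices to prove that if $\Gamma \vdash t : A ; \Delta$ and $t \rdu t'$, then $\Gamma \vdash t' : A ; \Delta$; the statement for $\rd$ will follow by a straightforward induction on the length of the reduction sequence. For the one-step case I would argue by induction on the structure of $t$: if the redex contracted lies strictly inside $t$, the induction hypothesis applied to the affected subterm together with the corresponding typing rule closes the case. The real work lies in the five head-redex cases, one for each reduction rule.

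The $\beta$-rule $(\lambda x.u\;v) \rdu u[x:=v]$, the projection rule $(\<t_1,t_2\>\;\pi_i) \rdu t_i$ (trivial, by inversion of $\wedge_i$) and the disjunction rule $(\omega_i t\;[x_1.u_1,x_2.u_2]) \rdu u_i[x_i:=t]$ all reduce to a standard term-substitution lemma: if $\Gamma, x:A \vdash u:B;\Delta$ and $\Gamma \vdash v:A;\Delta$, then $\Gamma \vdash u[x:=v]:B;\Delta$, itself proved by a routine induction on the derivation of $u$. The commuting conversion $((t\;[x_1.u_1,x_2.u_2])\;\varepsilon) \rdu (t\;[x_1.(u_1\;\varepsilon),x_2.(u_2\;\varepsilon)])$ is handled by rebuilding the typing tree: the outer elimination by $\varepsilon$ is pushed into each of the two branches and reassembled with $\vee_e$; only the shape of $\varepsilon$ needs to be inspected to know which elimination rule to insert.

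The main obstacle is the structural reduction $(\mu a.t\;\varepsilon) \rdu \mu a.t[a:=^{*}\varepsilon]$. If the whole redex has type $C$, then $\mu a.t$ has some type $B$ such that applying $\varepsilon$ to a term of type $B$ yields a term of type $C$. Inside $\mu a.t$ the name $a$ has type $B$, so every subterm $(a\;v)$ satisfies $v:B$; after the substitution, it becomes $(a\;(v\;\varepsilon))$ and must typecheck with $a:C$, which requires $(v\;\varepsilon):C$, exactly what $\varepsilon$ provides. This is packaged into the following \emph{structural substitution lemma}, proved by induction on $t$: for every $\mathcal{E}$-term $\varepsilon$ which transforms type $B$ into type $C$ (in each of the four shapes it may take: $\varepsilon=v$ with $\Gamma\vdash v:B'; \Delta$ and $B=B'\to C$; $\varepsilon=\pi_i$ with $B=B_1\wedge B_2$ and $C=B_i$; or $\varepsilon=[x.u_1,y.u_2]$ with $B=B_1\vee B_2$ and both branches concluding $C$), if $\Gamma \vdash t : D ; \Delta, a:B$ then $\Gamma \vdash t[a:=^{*}\varepsilon] : D ; \Delta, a:C$. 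Specialising $D=\bot$ and re-applying $abs_e$ gives $\Gamma \vdash \mu a.t[a:=^{*}\varepsilon] : C ; \Delta$, as required. The only delicate point is the uniform case analysis on the shape of $\varepsilon$, which must be carried through the inductive proof of this lemma; once it is in place, the subject reduction theorem is essentially a bookkeeping exercise.
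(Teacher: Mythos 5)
Your proof is correct. The paper gives no proof of this theorem (it is dismissed as ``straightforward''), and your argument --- reduction to the one-step case, a term-substitution lemma for the $\beta$-, projection- and injection-redexes, a rebuilding of the derivation for the commuting conversion, and a structural substitution lemma handling the change of type of the $\mu$-variable $a$ from $B$ to $C$ in the $(\mu a.t\;\varepsilon)$ case --- is exactly the standard argument the authors leave to the reader.
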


We have also the following properties (see \cite{and}, \cite{dav2}, \cite{deG2}, \cite{Nour7} and \cite{nour}).
\begin{theorem}(Confluence) If $t\triangleright^* t_1$ and $t\rd t_2$, 
then there exists $t_3$ such that $t_1\rd t_3$ and $t_2\rd t_3$.
\end{theorem}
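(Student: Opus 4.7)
The standard route is the Tait--Martin-L\"of parallel reduction technique, adapted to the present calculus. I would first introduce a parallel reduction $\Rightarrow$ on $\mathcal{E}$-terms that allows any number of the five root-contractions to be performed simultaneously, together with the usual congruence clauses for every syntactic constructor (including $\mu a.\_$, $(a\;\_)$ and $[x_{1}.\_,x_{2}.\_]$). Since $\triangleright \;\subseteq\; \Rightarrow \;\subseteq\; \triangleright^{*}$, the reflexive-transitive closures of $\triangleright$ and $\Rightarrow$ coincide, so it suffices to prove the diamond property for $\Rightarrow$: whenever $t \Rightarrow t_{1}$ and $t \Rightarrow t_{2}$, there exists $t_{3}$ with $t_{1} \Rightarrow t_{3}$ and $t_{2} \Rightarrow t_{3}$. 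The theorem then follows by the standard strip-lemma argument.

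Before that I would establish two substitution lemmas compatible with $\Rightarrow$: (i) if $t \Rightarrow t'$ and $u \Rightarrow u'$, then $t[x:=u] \Rightarrow t'[x:=u']$; and (ii) the analogue for the structural substitution, namely if $t \Rightarrow t'$ and $\varepsilon \Rightarrow \varepsilon'$, then $t[a:=^{*}\varepsilon] \Rightarrow t'[a:=^{*}\varepsilon']$. Both are proved by induction on the derivation of $t \Rightarrow t'$, the only delicate case of (ii) being $t = (a\;v)$, where by definition $(a\;v)[a:=^{*}\varepsilon] = (a\;(v[a:=^{*}\varepsilon]\;\varepsilon))$ and one closes using (i). These two lemmas are exactly what is needed to contract the created redexes in the $\beta$, pair, injection, permutative and $\mu$ cases of the diamond argument.

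With (i) and (ii) in hand, the diamond property is proved by induction on $t$, case-splitting on which outermost rule, if any, has been fired in each of the two parallel steps. The main obstacle is the interaction of the permutative rule $((t\;[x_{1}.u_{1},x_{2}.u_{2}])\;\varepsilon) \triangleright (t\;[x_{1}.(u_{1}\;\varepsilon),x_{2}.(u_{2}\;\varepsilon)])$ and the $\mu$-rule $(\mu a.t\;\varepsilon) \triangleright \mu a.t[a:=^{*}\varepsilon]$ with the other rules and with themselves: if the outer reduct uses one of these rules while an inner reduct contracts a redex inside $t$, $\varepsilon$ or the branches $u_{i}$, one has to produce a common reduct by iterated applications of (i) and (ii), and similarly when the permutative rule nests inside itself or when a $\mu$-rule sits inside a permutative one. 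Once these critical pairs are closed, the remaining cases are routine congruence bookkeeping, and confluence of $\triangleright^{*}$ follows from the diamond property of $\Rightarrow$ in the usual way.
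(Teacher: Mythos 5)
First, a point of comparison: the paper itself gives no proof of this theorem; it is stated with pointers to \cite{and} and \cite{nour}, where the Church--Rosser property is established by a parallel-reduction argument. So your overall strategy is the one used in those sources, and the two substitution lemmas you isolate (in particular the structural one for $t[a:=^*\varepsilon]$) are exactly the right auxiliary statements.

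However, your plan has a genuine gap at its central step: the claim that the naive Tait--Martin-L\"of parallel reduction $\Rightarrow$ satisfies the diamond property. Unlike the pure $\l$-calculus, this system has overlapping redexes. For instance, in $((\m a.t\;[x_1.u_1,x_2.u_2])\;\varepsilon)$ the inner application is a $\m$-redex and the outer application is a permutative redex, and the two share the inner application node, so a single parallel step can fire only one of them there. Firing the $\m$-rule first leads (after one more parallel step) to $\m a.t[a:=^*[x_1.u_1,x_2.u_2]][a:=^*\varepsilon]$, while firing the permutation first leads to $\m a.t[a:=^*[x_1.(u_1\;\varepsilon),x_2.(u_2\;\varepsilon)]]$; these differ by one permutative step \emph{at every occurrence of $(a\;v)$ created by the substitution}, and the same phenomenon occurs when the permutative rule nests inside itself. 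You acknowledge these interactions but propose to close them ``by iterated applications of (i) and (ii)'': that yields joinability of the two reducts in several parallel steps, i.e.\ only local confluence of $\Rightarrow$, which is strictly weaker than the diamond property. Since the theorem is asserted for arbitrary (untyped, hence possibly non-normalizing) terms, you cannot appeal to Newman's lemma to recover confluence from local confluence. The standard repair --- and the one effectively taken in \cite{and} --- is to strengthen the parallel reduction so that it can push an argument $\varepsilon$ through a whole ``segment'' of case-constructs and $\m$-abstractions in one step (equivalently, to use Takahashi's complete-development method, proving $t \Rightarrow s$ implies $s \Rightarrow t^*$ for a suitably defined $t^*$). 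Without such a strengthening, the diamond property you rely on is simply false for the relation you define.
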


\begin{theorem}(Strong normalization) If $\G \vdash t : A ; \Delta$, 
then $t$ is strongly normalizable.
\end{theorem}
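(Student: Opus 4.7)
The plan is to follow the reducibility candidates strategy alluded to in the introduction, specifically the variant developed in \cite{Nour7}. To each formula $A$ I associate a \emph{realisability candidate} $|A| \subseteq \mathcal{T}$ and, dually, a set $\|A\|$ of $\mathcal{E}$-terms playing the role of elimination stacks; the two are tied together by orthogonality against a fixed pole, which I take to be the set $SN$ of strongly normalising terms: $t \in |A|$ iff $(t\;\varepsilon) \in SN$ for every $\varepsilon \in \|A\|$.

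The inductive definition follows the shape of each connective. For $A \f B$ I put $\|A \f B\| = \{u\,\varepsilon : u \in |A|,\, \varepsilon \in \|B\|\}$; for $A \et B$ I use $\pi_i$-stacks composed with members of $\|A\|$ or $\|B\|$; for $A \ou B$ the stacks are case-analyses $[x_1.u_1, x_2.u_2]$ with $u_i$ in the appropriate candidate; and for atoms and $\perp$ a neutral choice suffices. A parallel induction on $A$ establishes that $|A| \subseteq SN$, that every stack of $\|A\|$ is built from SN components, and that both $|A|$ and $\|A\|$ are closed under anti-reduction by the rules of Section~2, so that the term constructors behave well.

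The heart of the argument is the \emph{adequation lemma}: if $\G \v t : A ; \Delta$ and $\s$ is a substitution sending each $\l$-variable $x : B$ in $\G$ to a term of $|B|$ and each $\m$-variable $a : B$ in $\Delta$ to a stack of $\|B\|$, then $\s(t) \in |A|$. This is proved by induction on the typing derivation; the introduction/elimination rules for $\f$, $\et$ and $\ou$ unfold directly from the definitions, given the anti-reduction closure. The main obstacle is the rule $abs_e$ for $\m a.t$, where one has to verify that replacing inductively each subterm $(a\;v)$ of $t$ by $(a\;(v\;\varepsilon))$, as dictated by $(\m a.t\;\varepsilon) \rdu \m a.t[a:=^*\varepsilon]$, preserves membership in the appropriate realisability set; this is handled by interpreting the declaration $a : B$ as quantifying over all $\varepsilon \in \|B\|$ and threading this quantification through the inductive hypothesis. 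The commutative reduction for case-analysis requires a parallel but simpler argument.

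Finally, strong normalisation of a typed term $\G \v t : A ; \Delta$ follows by instantiating the adequation lemma with the identity substitution: each variable $x : B$ belongs to $|B|$ because every stack of $\|B\|$ is SN, so $t \in |A| \subseteq SN$.
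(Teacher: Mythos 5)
The paper itself does not prove Theorem 2.3: it only cites \cite{dav2}, \cite{deG2} and \cite{Nour7}, and its Section~3 is precisely the abstraction of the \cite{Nour7} argument in which the pole $SN$ is replaced by an arbitrary $\mu$-saturated set $S$. Your architecture — candidates $|A|$ and stacks $\|A\|$ tied by orthogonality against $SN$, an adequation lemma by induction on the typing derivation, and the conclusion via a trivial substitution — is exactly that route, so the skeleton is the right one.

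The gap is that you have relegated to a side remark the only part of this proof that is actually hard, and stated it in a form that is false. A set of the shape $X \to SN$ is \emph{not} ``closed under anti-reduction by the rules of Section~2'': if $u[x:=v]$ is SN it does not follow that $(\lambda x.u\;v)$ is SN, since $v$ may be discarded (take $x \notin u$ and $v$ non-normalising); the same failure occurs for $(\omega_i t\;[x_1.u_1,x_2.u_2]) \triangleright u_i[x_i:=t]$ and for $(\mu a.t\;\varepsilon) \triangleright \mu a.t[a:=^*\varepsilon]$ when $a$ does not occur. Each expansion lemma must carry the side condition that the possibly-erased components are themselves SN (available in context, but it must be threaded through), and each must be proved by an induction on the lengths of maximal reduction sequences, analysing how head expansion interleaves with internal reductions. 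Worst of all is the permutative conversion $((t\;[x_1.u_1,x_2.u_2])\;\varepsilon) \triangleright (t\;[x_1.(u_1\;\varepsilon),x_2.(u_2\;\varepsilon)])$, which you call ``simpler'': the interaction of this rule with the others is precisely why strong normalisation for full classical natural deduction with disjunction remained open until \cite{deG2} and \cite{dav2}, and the required closure property of $SN$ under this expansion (covering the subcases $t = \omega_i t'$, $t = \mu a.t''$, $t$ itself a case-analysis, and reductions inside $\varepsilon$ that may be duplicated) is the crux of the whole theorem. As written, your proposal assumes this where it should prove it, so it does not yet establish the result; this is exactly the content that the paper outsources to \cite{Nour7} and that its own Section~3 deliberately avoids by axiomatising $S$ as $\mu$-saturated.
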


\section {The semantics}

\begin{definition}
\begin{enumerate}
\item We denote by $\mathcal{E}^{<\omega}$  the set of finite sequences 
of $\mathcal{E}$-terms. The empty sequence is denoted by $\emptyset$.
\item  We denote by $\bar{w}$ the sequence $w_1 w_2...w_n$. If $\bar{w}=w_1 w_2...w_n$, then $(t \;\bar{w})$ is $t$ if $n=0$ and $((t \; w_1)\;w_2...w_n)$ if $n\neq 0$. The term $t[a:=^*\bar{w}]$ is the term obtained from $t$ by replacing
  inductively each subterm in the form $(a \; v)$ by $(a \; (v
  \;\bar{w}))$. 
\item A set of terms $S$ is said to be $\mu$-saturated iff:
\begin{itemize}
\item For each terms $u$ and  $v$, if $ u\in S$ 
and $ v \rd u$, then $ v\in S$.
\item For each $a \in \mathcal{A}$ and for each $t \in S$, $\mu a.t \in
 S$ and $(a\; t) \in S$.
\end{itemize}
\item Consider two sets of terms $K$, $L$ and a $\m$-saturated set $S$,  
we define new sets of terms:
\begin{itemize}
\item $K \to L =\{ t$ / $(t\; u) \in L,$ for each $ u \in K\}$.
\item $K \wedge L = \{t$ / $(t\;\pi_1) \in K$ and $(t\;\pi_2) \in L 
\}$.
\item $K \vee L = \{t$ / for each $u,v$: if (for each $r \in K$,$s
 \in L$: $u[x:=r]\in S$ and $ v[y:=s]\in S)$, then $(t\;[x.u,y.v])\in
 S\}$.
\end{itemize}
\item  Let $S$ be a $\mu$-saturated set and $\{ R_i\}_{ i\in I}$ subsets of terms such that   
$R_i = X_i \to S$ for certains $X_i \subseteq
\mathcal{E}^{<\omega}$. A model $\mathcal{M}$ $ = \langle S; \{R_i\}_{
i\in I}\rangle$ is the smallest set of subsets of terms containing $S$
and $R_i$ and closed under constructors $\to$, $\wedge$ and $\vee$.
\end{enumerate}
\end{definition}

\begin{lemma} Let $\mathcal{M} =\langle S; \{R_i\}_{ i\in I}\rangle$
be a model and  $G \in \mathcal{M}$.

There exists a set $X\subseteq \mathcal{E}^{<\omega} $ such that $G= X \to S$.
\end{lemma}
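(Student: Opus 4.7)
The plan is to proceed by induction on the construction of $G$ as an element of the model $\mathcal{M}$, using the fact that $\mathcal{M}$ is defined as the smallest collection containing $S$ and the $R_i$ and closed under the three constructors $\to$, $\wedge$, $\vee$. The notation $X\to S$ for $X\subseteq \mathcal{E}^{<\omega}$ is understood via item~2 of the preceding definition: $t\in X\to S$ iff $(t\;\bar w)\in S$ for every $\bar w\in X$.

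For the base cases, if $G=S$ then $X=\{\emptyset\}$ works, since $(t\;\emptyset)=t$ and thus $X\to S=S$. If $G=R_i$ then $X=X_i$ works by the definition of $R_i$.

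For the inductive cases, suppose that $K,L\in\mathcal{M}$ are already written as $K=X_K\to S$ and $L=X_L\to S$. For the arrow case $G=K\to L$, I would take $Y=\{u\,\bar w\;:\;u\in K,\;\bar w\in X_L\}$ (where $u\,\bar w$ denotes the sequence starting with $u$ followed by $\bar w$); unfolding shows $t\in Y\to S$ iff for every $u\in K$ one has $(t\;u)\in X_L\to S=L$, which is exactly $t\in K\to L$. For the conjunction case $G=K\wedge L$, I would take $Y=\{\pi_1\,\bar v\;:\;\bar v\in X_K\}\cup\{\pi_2\,\bar w\;:\;\bar w\in X_L\}$; this makes $t\in Y\to S$ equivalent to $(t\;\pi_1)\in K$ and $(t\;\pi_2)\in L$, i.e.\ $t\in K\wedge L$. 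For the disjunction case $G=K\vee L$, I would take
\[
Y=\bigl\{[x.u,y.v]\;:\;\text{for every }r\in K,\,s\in L,\ u[x:=r]\in S\text{ and }v[y:=s]\in S\bigr\},
\]
viewing each $[x.u,y.v]$ as a one-element sequence; by the definition of $K\vee L$, the condition $t\in Y\to S$ coincides with $t\in K\vee L$.

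No step is really a serious obstacle here; the only mild subtlety is parsing the notation so that sequences $\bar w\in\mathcal{E}^{<\omega}$ are fed into the applicative context $(t\;\bar w)$ and correctly match the clauses of the recursive definition. The disjunction case is the most syntactically heavy because one must identify the right family of $[x.u,y.v]$-branchings, but once the set $Y$ is chosen the verification is an immediate rewriting of the definition.
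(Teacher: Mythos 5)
Your proposal is correct and follows essentially the same induction as the paper: the same choice of $X$ in the base case $S$ and in the $\to$ and $\vee$ cases, with the $\wedge$ case (which the paper dismisses as ``similar'') and the $R_i$ base case spelled out explicitly. Nothing differs in substance.
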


\begin{proof} By induction on $G$.
\begin{itemize}
\item  $G=S$: Take $X=\{\emptyset\}$, it is clear that
$S=\{\emptyset\} \to S$.  

\item  $G=G_1 \to G_2$: We have $G_2=X_2 \to
S$ for a certain set $X_2$. Take $X=\{u\;
\bar{v}$ / $u \in G_1, \bar{v}\in X_2\}$. We can easly check that $G = X \to S $.

\item  $G=G_1 \wedge G_2$: Similar to the previous case. 

\item  $G=G_1 \vee G_2$: Take $X=\{[x.u,y.v]$ / for each $r\in G_1$ and $s\in
G_2\;,\; u[x:=r] \in S$ and $ v[y:=s] \in S \}$. By definition $G = X
\to S$.
\end{itemize}
\end{proof}

\begin{definition} Let $\mathcal{M} = \langle
S; \{R_i\}_{ i\in I}\rangle$ be a model and $G \in \mathcal{M}$, we
define the set $G^\perp = \cup \{X$ / $G = X \to S \}$.
\end{definition}

\begin{lemma}
Let $\mathcal{M} = \langle S; \{R_i\}_{ i\in I}\rangle$ be a model and
$G \in \mathcal{M}$. 

We have $G= G^\perp \to S$ ($G^\perp$ is the greatest $X$ such that $G
= X \to S$).
\end{lemma}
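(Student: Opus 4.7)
The plan is to prove the equality $G = G^\perp \to S$ by double inclusion, relying crucially on the previous lemma which guarantees that the collection $\{X : G = X \to S\}$ is non-empty.

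For the inclusion $G \subseteq G^\perp \to S$, I would take an arbitrary $t \in G$ and show $(t\; u) \in S$ for every $u \in G^\perp$. By the definition of $G^\perp$ as a union, any such $u$ lies in some $X$ with $G = X \to S$; since $t \in G = X \to S$, the definition of the arrow connective immediately gives $(t\; u) \in S$. This direction is essentially a direct unfolding of the definitions.

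For the reverse inclusion $G^\perp \to S \subseteq G$, I would invoke the previous lemma to fix some particular $X_0$ with $G = X_0 \to S$. By the definition of $G^\perp$ as a union, $X_0 \subseteq G^\perp$. Now take $t \in G^\perp \to S$: for every $u \in G^\perp$ one has $(t\; u) \in S$, and in particular this holds for every $u \in X_0$, so $t \in X_0 \to S = G$. This is the step where the union definition pays off, by giving us an $X$ small enough to witness membership in $G$ while $G^\perp$ is large enough to dominate it.

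The maximality assertion (that $G^\perp$ is the greatest such $X$) is then immediate: if $G = X \to S$, then by construction $X$ is one of the sets being unioned, so $X \subseteq G^\perp$. I don't anticipate any real obstacle here; the only subtlety worth flagging is that the proof relies on the non-emptiness of the family $\{X : G = X \to S\}$, which is precisely what the preceding lemma supplies, so this theorem should be read as the natural dual companion to that lemma.
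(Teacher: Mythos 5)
Your proof is correct and is essentially the paper's argument spelled out: the paper compresses both inclusions into the single observation that if $G = X_j \to S$ for every $j \in J$ then $G = \bigcup_{j\in J} X_j \to S$, which is exactly what your double-inclusion argument establishes. You are also right to flag the reliance on non-emptiness of the family $\{X : G = X \to S\}$ via the preceding lemma, a point the paper leaves implicit.
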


\begin{proof}
This comes from the fact that: if, for every $j \in J$, $G=X_j \to S$,
then $G=\cup_{j \in J} X_j \to S$.
\end{proof}

\begin{definition} 
\begin{enumerate}
\item Let $\mathcal{M} = \langle S; \{R_i\}_{ i\in I}\rangle$ be a model.  An
$\mathcal{M}$-interpretation $I$ is an application from
the set of propositional variables to $\mathcal{M}$ which we extend
for any type as follows:
\begin{itemize}
\item $I(\perp)=S$
\item $I(A \to B)= I(A) \to I(B)$.
\item $I(A\wedge B)= I(A) \wedge I(B)$.
\item $I(A\vee B)= I(A) \vee I(B)$.
\end{itemize}
The set $\vert A \vert_{\mathcal{M}} =\cap \{ I(A)$ / $I$ an
$\mathcal{M}$-interpretation$\}$ is the interpretation of $A$ in
$\mathcal{M}$.
\item The set $\vert A \vert = \cap \{\vert A \vert_{\mathcal{M}}$ /
$\mathcal{M}$ a model$\}$ is the interpretation of $A$.
\end{enumerate}
\end{definition}

\begin{lemma}(Adequation lemma) \label{adq}  Let $\mathcal{M} =\langle  
S; \{R_i\}_{ i\in I} \rangle$ be a model, $I$ a
$\mathcal{M}$-interpretation, $\Gamma =\{x_i : A_i\}_{1\le i\le n}$, $\Delta =\{a_j : B_j\}_{1\le j\le m}$, $ u_i
\in I(A_i)$, $\bar{v_j} \in I(B_j)^\perp$.

If $\Gamma \vdash t:A ; \Delta$, then
$t[x_1:=u_1,...,x_n:=u_n,a_1:=^*\bar{v_1},...,a_m:=^*\bar{v_m}]\in I(A)$.
\end{lemma}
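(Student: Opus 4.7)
The plan is to proceed by induction on the derivation of $\G \v t : A ; \Delta$. Let $\sigma$ abbreviate the simultaneous substitution $[x_1:=u_1,\ldots,x_n:=u_n,a_1:=^*\bar{v_1},\ldots,a_m:=^*\bar{v_m}]$. The main tool is the identity $I(A) = I(A)^\perp \f S$ supplied by the preceding lemma: in each inductive case it reduces the goal $t\sigma \in I(A)$ to the subgoal that $(t\sigma\;\bar{w}) \in S$ for every $\bar{w} \in I(A)^\perp$. The two clauses of $\m$-saturation of $S$ -- closure under anti-reduction, and closure under $\m$-abstraction and naming $(a\;\cdot)$ -- are then invoked to close each case.

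The routine cases are the axiom (by the very definition of $\sigma$), the elimination rules $\f_e$ and $\wedge_e^i$ (which follow directly from the definitions of $K \f L$ and $K \wedge L$), and the introduction rules $\wedge_i$, $\vee_i^1$, $\vee_i^2$, each of which reduces by a single head step to a term provided by the induction hypothesis and then uses anti-reduction inside $S$. For $\f_i$, one observes that $(\l x.(t\sigma)\;u\;\bar{w}) \rd (t\sigma[x:=u]\;\bar{w})$, applying the IH with the substitution extended by $x:=u \in I(A)$ and using $\bar{w} \in I(B)^\perp$. For $abs_i$, $(a\;t)\sigma = (a\;(t\sigma\;\bar{v}))$; the IH gives $t\sigma \in I(A)$, hence $(t\sigma\;\bar{v}) \in S$, and the second $\m$-saturation clause concludes.

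The two cases requiring more care are $\vee_e$ and $abs_e$, both of which push a probe sequence through a head constructor. For $abs_e$, iterating the rule $(\m a.t\;\varepsilon) \rdu \m a.t[a:=^*\varepsilon]$ over $\bar{w}$ gives $(\m a.(t\sigma)\;\bar{w}) \rd \m a.(t\sigma[a:=^*\bar{w}])$; applying the IH to $\G \v t : \bot ; \Delta, a:A$ with the substitution augmented by $a :=^* \bar{w} \in I(A)^\perp$ yields $t\sigma[a:=^*\bar{w}] \in I(\bot) = S$, and the two saturation clauses then deliver $(\m a.(t\sigma)\;\bar{w}) \in S$. For $\vee_e$, one iterates the commutation rule $((t\;[x.u,y.v])\;\varepsilon) \rdu (t\;[x.(u\;\varepsilon),y.(v\;\varepsilon)])$ to reduce $((t\sigma\;[x.u\sigma,y.v\sigma])\;\bar{w})$ to $(t\sigma\;[x.(u\sigma\;\bar{w}),y.(v\sigma\;\bar{w})])$; the IH applied to each branch with substitution extended by $x:=r \in I(A)$ or $y:=s \in I(B)$ shows that $(u\sigma[x:=r]\;\bar{w}) \in S$ and symmetrically for $v$; then $t\sigma \in I(A) \vee I(B)$, instantiated with $u' = (u\sigma\;\bar{w})$ and $v' = (v\sigma\;\bar{w})$, delivers the displayed term into $S$, and anti-reduction concludes.

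The main obstacle is the syntactic bookkeeping for the structural substitution $[a:=^*\cdot]$: one must verify that iterating $(\m a.\cdot\;w_i) \rdu \m a.\cdot[a:=^*w_i]$ over $\bar{w} = w_1\ldots w_k$ amounts to the single $[a:=^*\bar{w}]$, and that $[a:=^*\bar{w}]$ commutes with the $\l$-variable substitutions in $\sigma$ and with the fresh $[x:=r]$ in the $\vee_e$ case, so that the induction hypothesis can be invoked with the enlarged substitution. Once these commutations are in hand, every case reduces to a head-reduce-then-apply-IH argument modulo the $\m$-saturation of $S$.
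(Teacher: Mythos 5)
Your proposal is correct and follows essentially the same route as the paper's own proof: induction on the derivation, using $G = G^{\perp} \to S$ to reduce each goal to membership of $(t\sigma\;\bar{w})$ in $S$, then closing by anti-reduction and the two $\mu$-saturation clauses, with the $\vee_e$ and $abs_e$ cases handled by pushing the probe sequence through exactly as the paper does. The only difference is that you make explicit the substitution-commutation bookkeeping that the paper leaves tacit.
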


\begin{proof} 
Let us denote by $s'$ the term 

$s[x_1:=u_1,...,x_n:=u_n,a_1:=^*\bar{v_1},...,a_m:=^*\bar{v_m}]$. 

The proof is by induction on the derivation, we consider the last
rule:

\begin{enumerate}
\item ax, $\to_e$ and $\wedge_e$: Easy.

\item $\to_i$: In this case $t=\l x.u$ and $A= B\to C$ such that 
$\Gamma, x:B\vdash u: C\,\,\, ; \, \Delta$. By induction hypothesis,
$u'[x:=v] \in I(C)= I(C)^\perp \to S$ for each $v\in I(B)$, then
$(u'[x:=v]\; \bar{w}) \in S$ for each $ \bar{w} \in I(C)^\perp
$, hence $((\l x.u'\;v)\;\bar{w})\in S$ because $((\l x.u'\;
v)\;\bar{w}) \rd (u'[x:=v]\;\bar{w})$. Therefore $t'=\l x.u' \in I(B) \to
I(C) = I(A)$.

\item  $\wedge_i$ and $\vee_i^j$: A similar proof.

\item $\vee_e$: In this case $t=(t_1\;[x.u,y.v])$ with $(\Gamma \vdash 
t_1:B\vee C ; \Delta)$, $(\Gamma, x:B\vdash u: A ; \Delta)$ and
$(\Gamma, y:C\vdash v: A ; \Delta)$. Let $r \in I(B)$ and $s \in
I(C)$, by induction hypothesis, $t'_1 \in I(B) \vee I(C)$, $u'[x:=r]
\in I(A) $ and $v'[y:=s] \in I(A)$. Let $\bar{w} \in I(A)^\perp$, then
$(u'[x:=r]\; \bar{w}) \in S $ and $(v'[y:=s]\; \bar{w})
\in S$, hence 
$(t'_1\;[x.(u'\; \bar{w}),y.(v'\; \bar{w})]) \in S$, since
$((t'_1\;[x.u',y.v') ]\;\bar{w}) \rd (t'_1\;[x.(u'\;
\bar{w}),y.(v'\; \bar{w})])$ then $((t'_1\;[x.u',y.v') ]\;\bar{w})\in
S$.  Therefore $t'=(t'_1\;[x.u',y.v'])\in I(A)$.

\item $abs_e$: In this case $t=\mu a.t_1 $
 and $\Gamma \vdash t_1 : \perp\,\,\, ; \, \Delta' , a:A$. Let
 $\bar{v} \in I(A)^\perp$. It suffies to prove that $(\mu a.t'_1
 \;\bar{v})
\in S$. By induction hypothesis, $t'_1[a:=^*\bar{v}] \in I(\perp)=S$, then $\m 
a.t'_1[a:=^*\bar{v}]\in S$ and $(\m a.t'_1 
\; \bar{v})\in S$. 
\item $abs_i$: In this case $t= (a_j \;u)$ and $\Gamma \vdash  u : B_j ; \Delta', a_j:B_j$. We have to prove that $t' \in S$. 
By induction hypothesis $u' \in I(B_j)$, then $(u'\;\bar{v_j}) \in S$,
hence $t'=(a\;(u'\; \bar{v_j})) \in S$.
\end{enumerate}
\end{proof}

\begin{theorem}(Correctness theorem)\label{closed}
If $\vdash t:A$, then $t \in\vert A \vert$.
\end{theorem}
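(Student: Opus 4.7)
The plan is to derive the Correctness Theorem as an almost immediate corollary of the Adequation Lemma (Lemma~\ref{adq}), by specialising it to the case where both contexts are empty.

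First I would fix an arbitrary model $\mathcal{M} = \langle S; \{R_i\}_{i \in I}\rangle$ and an arbitrary $\mathcal{M}$-interpretation $I$. Since by hypothesis $\vdash t : A$, we have a derivation whose contexts $\Gamma$ and $\Delta$ are both empty, i.e.\ $n = m = 0$ in the notation of Lemma~\ref{adq}. The substitution $[x_1 := u_1, \dots, x_n := u_n, a_1 :=^* \bar{v_1}, \dots, a_m :=^* \bar{v_m}]$ that appears there then reduces to the empty substitution, so the term $t'$ produced by the lemma is simply $t$ itself. The vacuous conditions ``$u_i \in I(A_i)$'' and ``$\bar{v_j} \in I(B_j)^\perp$'' are trivially satisfied since there are no $u_i$ nor $\bar{v_j}$ to choose.

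Applying Lemma~\ref{adq} in this situation yields directly $t \in I(A)$. Since $\mathcal{M}$ and $I$ were arbitrary, intersecting over all $\mathcal{M}$-interpretations gives $t \in \vert A \vert_{\mathcal{M}}$, and then intersecting over all models $\mathcal{M}$ gives $t \in \vert A \vert$, which is the desired conclusion.

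Since the proof is a direct specialisation of the adequation lemma, there is essentially no obstacle; the only point worth being careful about is checking that the two universally quantified conditions in the statement of Lemma~\ref{adq} are indeed vacuously satisfied when $n = m = 0$, so that the lemma can be invoked without supplying any witnesses.
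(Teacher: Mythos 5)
Your proposal is correct and is exactly the paper's argument: the paper proves the theorem by saying it follows ``immediately from the previous lemma,'' and your write-up simply spells out the specialisation to empty contexts and the intersection over all interpretations and models. Nothing to add.
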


\begin{proof}
Immediately from the previous lemma.
\end{proof}

\section{The operational behaviors of some typed terms}

The following results are some applications of the correctness theorem.

\begin{definition}
Let $t$ be a term. We denote $M_t$ the smallest set containing $t$
such that: if $u \in M_t$ and $a \in {\cal A}$, then $\m a.u \in M_t$
and $(a \; u) \in M_t$. Each element of $M_t$ is denoted $\sou{\m}.t$.
For exemple, the term $\m a.\m b.(a \; (b \; (\m c. (a \; \m d.t))))$ is
denoted by $\sou{\m}.t$.
\end{definition}
In the next of the paper, the letter $P$ denotes a propositional
variable which represents an arbitrary type.

\subsection{Terms of type $\perp \to P $ ``Ex falso sequitur quodlibet''}

\begin{example}
 Let ${\cal T} =\l z.\m a.z$. We have  ${\cal T} : \perp \to P$ and for
every term $t$ and $\bar{u} \in \mathcal{T}^{<\omega}$, $(({\cal T}\; t) \;
\bar{u}) \rd \m a.t$.
\end{example}

\begin{remark} The term $({\cal T} \;t)$ modelizes an instruction like
${\tt exit}(t)$ (${\tt exit}$ is to be understood as in the C
programming language). In the reduction of a term, if the sub-term
$({\cal T} \;t)$ appears in head position (the term has the form
$(({\cal T}\; t) \; \bar{u})$), then after some reductions, we obtain
$t$ as result.
\end{remark}

The general operational behavior of terms of type $\perp \to P$ is
given in the following theorem:

\begin{theorem}
Let $T$ be a closed term of type $\perp \to P$, then for every term
$t$ and $\bar{u} \in \mathcal{E}^{<\omega}$, $((T\; t) \;
\bar{u}) \rd \sou {\m}. t$.
\end{theorem}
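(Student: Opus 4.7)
The plan is to apply the Correctness Theorem with a carefully chosen model and interpretation so that membership in $I(\perp \to P)$ forces the desired reduction behavior. Fix the term $t$. Set $S = \{s$ / $s \rd m$ for some $m \in M_t\}$, i.e.\ the set of terms reducing in zero or more steps to some $\sou{\m}.t$. I would check that $S$ is $\m$-saturated: closure under anti-reduction is immediate from transitivity of $\rd$; and given $s \in S$ with $s \rd m \in M_t$, we have $\m a.s \rd \m a.m$ and $(a\;s) \rd (a\;m)$, both of which lie in $M_t$ by definition, so $\m a.s, (a\;s) \in S$. Note also that $t \in M_t$, whence $t \in S$.

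Next, let $R = \mathcal{E}^{<\omega} \to S = \{r$ / $(r\;\bar{w}) \in S$ for every $\bar{w} \in \mathcal{E}^{<\omega}\}$. This is of the admissible form $X \to S$ with $X = \mathcal{E}^{<\omega}$, so $\mathcal{M} = \langle S; R\rangle$ is a well-formed model in the sense of the definition of Section~3. Pick any $\mathcal{M}$-interpretation $I$ with $I(P) = R$; since $I(\perp) = S$ by definition, we obtain $I(\perp \to P) = S \to R$.

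Now apply the Correctness Theorem to the closed term $T$: it yields $T \in |\perp \to P| \subseteq I(\perp \to P) = S \to R$. Combined with $t \in S$, this gives $(T\;t) \in R$, and hence for every $\bar{u} \in \mathcal{E}^{<\omega}$ one has $((T\;t)\;\bar{u}) \in S$. Unfolding the definition of $S$ gives exactly $((T\;t)\;\bar{u}) \rd \sou{\m}.t$, which is the desired conclusion.

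The only real design choice, and thus the main thing to get right, is the definition of $S$: it must simultaneously be $\m$-saturated (so it can serve as the base of a model), contain $t$ itself (so that $t$ can legitimately fill the $\perp$-argument slot of $T$), and consist only of terms reducing to the target shape $\sou{\m}.t$ (so that membership in $S$ implies the conclusion). The set above meets all three requirements; once it and $R$ are in place, the argument is a direct invocation of the adequation lemma with no further calculation needed.
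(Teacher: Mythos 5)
Your proof is correct and follows essentially the same route as the paper's: the same $\m$-saturated set $S$ of terms reducing to some $\sou{\m}.t$, an interpretation with $I(P)$ of the form $X \to S$, and a direct appeal to the correctness theorem. The only (harmless, slightly cleaner) difference is that you take $R = \mathcal{E}^{<\omega} \to S$ so that all sequences $\bar{u}$ are handled at once, whereas the paper fixes $\bar{u}$ first and uses $R = \{\bar{u}\}\to S$.
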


\begin{proof} 
Let $t$ be a term and $\bar{u}\in \mathcal{E}^{<\omega}$.  Take
$S=\{v$ / $v\rd \sou{\m}. t\}$ and $R=\{\bar{u}\}\to S$. It is clear
that $S$ is $\m$-saturated set and $t \in S$. Let $\mathcal{M}=\langle
S;R \rangle$ and $I$ an $\mathcal{M}$-interpretation such that
$I(P)=R$. By the theorem
\ref{closed}, we have $T \in S \to (\{\bar{u}\}\to S)$, then  $((T\;t)\;
\bar{u}) \in S$ and  $((T\; t) \;
\bar{u}) \rd \sou {\m}. t$.
\end{proof} 

\subsection{Terms of type $(\neg P \to P) \to P $ ``Pierce law''} 

\begin{example} Let ${\cal C}_1 =\l z.\m a.(a \, (z\,\, \l y.(a\, y)))$ and 

${\cal C}_2 =\l z.\mu
a.(a \,
(z\,\,(\l x.a (z\,\, \l y. (a\,x)))))$.

We have $\vdash {\cal C}_i : (\neg P \to P) \to P$ for $i \in \{1,2\}$.

Let  $u,v_1,v_2$ be terms and $\bar{t} \in
\mathcal{E}^{<\omega}$, we have :

$(({\cal C}_1\; u)\; \bar{t}) \rd \m a.a\;((u\,\te_1) \, \bar{t})$ and $(\te_1
\, v_1) \rd (a \, (v_1\; \bar{t}))$

and

$(({\cal C}_2 \;u)\; \bar{t}) \rd \mu a.((a \, ((u \, \te_1) \, \bar{t}))\,\bar{t})$,
$(\te_1\; v_1) \rd (a \,((u \, \te_2)\,\bar{t}))$ and 
$(\te_2\; v_2) \rd (a \, (v_1 \, \bar{t}))$.
\end{example}

\begin{remark}
The term ${\cal C}_1$ allows to modelizing the ${\tt Call/cc}$ instruction in
the Scheme functional programming language.
\end{remark}

The following theorem describes the general operational behavior of
terms with type $(\neg P \to P) \to P$.

\begin{theorem}
Let $T$ be a closed term of type $(\neg P \to P) \to P$, then for
every term $u$ and $\bar{t}
\in\mathcal{E}^{<\omega}$, there exist $m \in \mathbb{N}$ and terms
$\te_1,...,\te_m$ such that for every terms $v_1,...,v_m$, we
have:

$((T\; u)\; \bar{t}) \rd \sou{\m}. ((u\,\te_1)\, \bar{t})$

$(\te_i\; v_i)\rd \sou{\m}. ((u \, \te_{i+1})\, \bar{t})$ for every $1
\le i \le m-1 $

$(\te_m\; v_m) \rd \sou{\m}. (v_{i_0}\, \bar{t})$ for a certain  $1\le i_0 \le m$ 
\end{theorem}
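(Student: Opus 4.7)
My strategy is to mimic the preceding proof (case $\perp \f P$), but with a richer $\m$-saturated set $S$ whose members may reduce either to a ``terminating'' head $(v\,\bar{t})$ or to a ``continuing'' head $((u\,\te)\,\bar{t})$. Fix $T$, $u$, $\bar{t}$, pick fresh $\l$-variables $y_1, y_2, \ldots$ nowhere occurring in $T, u, \bar{t}$, and define cumulatively by transfinite induction: $U_0 = \{(y_j\,\bar{t}) : j \ge 1\}$; $S_\a$ is the smallest $\m$-saturated set containing $U_\a$; $R_\a = \{v : (v\,\bar{t}) \in S_\a\}$; $U_{\a+1} = U_\a \cup \{((u\,\te)\,\bar{t}) : \te \in R_\a \f S_\a\}$; and $U_\l = \bigcup_{\a<\l} U_\a$ at limits. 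The chain $U_\a$ is increasing and contained in the set of all terms, so it stabilises at some ordinal $\Omega$; set $S = S_\Omega$ and $R = R_\Omega$.

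\textbf{Applying Correctness.} By stabilisation, for every $\te \in R \f S$ we have $((u\,\te)\,\bar{t}) \in U_{\Omega+1} = U_\Omega \subseteq S$, hence $(u\,\te) \in R$; so $u \in (R \f S) \f R$ for free, regardless of the shape of $u$. Since $R = \{\bar{t}\} \f S$, $\mathcal{M} = \<S; R\>$ is a model. Taking an $\mathcal{M}$-interpretation $I$ with $I(P) = R$, Lemma~\ref{adq} applied to $T : (\neg P \f P) \f P$ gives $T \in ((R \f S) \f R) \f R$, and hence $((T\,u)\,\bar{t}) \in S$.

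\textbf{Extraction and finiteness.} Since the $y_j$ occur nowhere in $T, u, \bar{t}$, they cannot appear in any reduct of $((T\,u)\,\bar{t})$; so its membership in $S$ must be witnessed not by the base clause but by some $((u\,\te_1)\,\bar{t})$ with $\te_1 \in R_{\a_1} \f S_{\a_1}$, where $\a_1 + 1$ is the least level at which $((T\,u)\,\bar{t})$ enters $S$. Since $y_1 \in R$, we have $(\te_1\,y_1) \in S_{\a_1}$, so $(\te_1\,y_1) \rd \sou{\m}.s'$ for some $s' \in U_{\a_1}$; the same freshness argument, now applied to $\te_1$ (which itself contains no $y_j$), forces either $s' = (y_1\,\bar{t})$ (stop, with $i_0 = 1$) or $s' = ((u\,\te_2)\,\bar{t})$ with $\te_2 \in R_{\a_2} \f S_{\a_2}$ and $\a_2 < \a_1$. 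Iterating produces a strictly descending chain of ordinals, which by well-foundedness terminates in finitely many steps, so $m$ is finite. Substituting $v_i$ for the fresh $y_i$ throughout---harmless since the $y_i$ are absent from $T, u, \bar{t}$---delivers the three reductions of the theorem, with $\te_{i+1}$ possibly depending on $v_1, \ldots, v_i$, in agreement with the examples. The principal difficulty is calibrating $S$ so that its closure condition puts $u$ into $(R \f S) \f R$ regardless of the shape of $u$, while the stratification still descends strictly, ensuring termination.
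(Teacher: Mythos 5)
Your proof is correct, but it takes a genuinely different route from the paper's. The paper simply defines $S$ to be the set of terms that already have the target operational behaviour, namely $S=\{t \ / \ \exists m\ge 0,\ \exists \te_1,\dots,\te_m : t\rd \sou{\m}.((u\,\te_1)\,\bar{t}),\ (\te_i\,v_i)\rd\sou{\m}.((u\,\te_{i+1})\,\bar{t}),\ (\te_m\,v_m)\rd\sou{\m}.(v_{i_0}\,\bar{t})\}$, checks that this $S$ is $\m$-saturated, verifies $u\in(R\f S)\f R$ directly from the definition of $S$ (membership of $((u\,\te)\,\bar{t})$ is witnessed by $\te_1=\te$ followed by the witnesses for $(\te\,v_1)\in S$), and then reads the conclusion off $((T\,u)\,\bar{t})\in S$ with no further work. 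You instead build $S$ as a stratified least fixed point calibrated so that $u\in(R\f S)\f R$ holds by construction, and then must do real work to extract the behaviour: the characterisation of the smallest $\m$-saturated set containing $U$ as $\{v \ / \ v\rd\sou{\m}.s,\ s\in U\}$, preservation of free variables under reduction (so the fresh $y_j$ cannot appear spontaneously), and the strictly descending ordinals $\a_1>\a_2>\dots$ forcing $m$ to be finite; all of these steps are sound. What your detour buys: it works verbatim for an arbitrary term $u$ (the paper's proof quietly replaces ``every term $u$'' by ``let $u$ be a $\l$-variable''), it makes the finiteness of $m$ a consequence of well-foundedness rather than something wired into the definition of $S$, and it is the more systematic, orthogonality-style argument. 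What it costs is length, and it surfaces explicitly the fact---already visible in the example ${\cal C}_2$ and implicit in the paper's own proof, where the $v_i$ occur free in the definition of $S$---that $\te_{i+1}$ may depend on $v_1,\dots,v_i$, so the quantifier prefix $\exists\te_1,\dots,\te_m\ \forall v_1,\dots,v_m$ of the statement must be read with the later $\te$'s allowed to depend on the earlier $v$'s; this is a defect of the statement, not of your argument.
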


\begin{proof}
Let $u$ be a $\l$-variable and $\bar{t} \in \mathcal{E}^{<\omega}$.
Take $S=\{t$ / $\exists m\ge 0, \exists \te_1,...,\te_m$ : $t \rd
\sou{\m}. ((u \;\te_1)\,\bar{t})$, $(\te_i\; v_i) \rd \sou{\m}. 
((u \,\te_{i+1})\,\bar{t})$ for every $1\le i \le m-1$ and $(\te_m\; v_m) \rd 
\sou{\m}. (v_{i_0} \bar{t})$ for a certain $1\le i_0 \le m \}$ and $R= \{\bar{t}\} \to S$. It 
is clear that $S$ is a $\mu$-saturated set.  Let $\mathcal{M} =
\langle S;R \rangle$ and an $\mathcal{M}$-interpretation $I$ such that
$I(P)= R$. By the theorem \ref{closed}, $T \in [(R \to S) \to R] \to
(\{\bar{t}\}
\to S)$. It is suffies to check that $u \in (R \to S) \to R$. For
this, we take $\te \in (R \to S)$ and we prove that $(u\; \te) \in R$
i.e. $((u\; \te)\;\bar{t}) \in S$. But by the definition of $S$, it
suffies to have $(\te\; v_i) \in S$, which is true since the terms
$v_i \in R$, because $(v_i\;\bar{t})\in S$.
\end{proof}

\subsection{Terms of  type $\neg P \vee P $ ``Tertium non datur''}
 
\begin{example}
Let ${\cal W} =\mu b.(b\, \omega_1 \mu a.(b\, \omega_2 \l y.(a\,y)))$. We
have $\vdash {\cal W} : \neg P \vee P$.

Let $x_1, x_2$ be $\l$-variables, $u_1, u_2,v$ terms and $\bar{t}
\in \mathcal{E}^{<\omega}$. We have:

$({\cal W} \,[x_1.u_1,x_2.u_2]) \rd \mu b.(b \, \,u_1\,[x_1:=\te_1^1])$

$(\te_1^1 \,\bar{t}) \rd \mu a.(b\, u_2\,[x_2:=\te_2 ^2])$

$(\te_2^2 \,v) \rd (a(v \,\bar{t}))$

where $\te_1^1=\mu a.(b\; (\omega_2 \l y.(a\;y) \;[x_1.u_1,x_2.u_2]))$
and $\te_2 ^2=\l y.(a\;(y\;\bar{t}))$.
\end{example}

\begin{remark}
The term ${\cal W}$ allows to modelizing the ${\tt try...with...}$ instruction in
the Caml programming language.
\end{remark}

The following theorem gives the behavior of all terms with type $\neg
P \vee P$.

\begin{theorem}
Let $T$ be a closed term of type $\neg P \vee P$, then for every $\l$-variables $x_1, x_2$  and terms
$u_1 ,u_2$ and $(\bar{t_n})_{n \ge 1}$ a sequence of
$\mathcal{E}^{<\omega}$, there exist $m \in \mathbb{N}$ and terms
$\te_1^i,...,\te_m^i$ $1\le i \le 2$ such that for all terms
$v_1,...,v_m$, we have:

$(T\,[x_1.u_1,x_2.u_2]) \rd \sou{\m}. u_i[x_i:=\te_1^i]$

$(\te_j^1 \, \bar{t_j}) \rd \sou{\m}. u_i[x_i:=\te_{j+1}^i]$ for all $1\le j
\le m-1 $

$(\te_j^2\; v_j) \rd \sou{\m}. u_i[x_i:=\te_{j+1}^i]$ for all $1\le
j \le m-1 $

$(\te_m^1 \bar{t_m}) \rd \sou{\m}.(v_p \;\bar{t_q})$ for a certain $1\le p \le m 
$ and a certain $1\le q \le m$

$(\te_m^2\; v_m) \rd \sou{\m}.(v_p \;\bar{t_q})$ for a certain $1\le p \le m$
and a certain $1\le q \le m$

\end{theorem}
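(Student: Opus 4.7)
The proof adapts the strategy of the two previous theorems: construct a $\m$-saturated set $S$ encoding the desired reduction pattern, form a model, and invoke the Correctness Theorem.

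Fix the data $x_1,x_2,u_1,u_2$ and the sequence $(\bar{t_n})_{n\ge1}$. Define $S$ as the set of terms $t$ for which there exist $m\ge 1$ and a family $(\te_j^i)_{1\le j\le m,\,i\in\{1,2\}}$ such that, for all terms $v_1,\ldots,v_m$, the full reduction pattern of the theorem holds with $t$ playing the role of $(T\,[x_1.u_1,x_2.u_2])$. The $\m$-saturation of $S$ is immediate: closure under $\rd$ is built into the definition, and closure under $\m a.$ and $(a\,\cdot)$ follows because the $\sou{\m}$ prefix in each reduct absorbs these two constructors.

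Set $R = \{\bar{t_j} : j\ge1\}\to S$, let $\mathcal{M} = \<S;R\>$, and choose an $\mathcal{M}$-interpretation $I$ with $I(P)=R$, so that $I(\neg P\vee P)=(R\to S)\vee R$. By Theorem~\ref{closed}, $T\in (R\to S)\vee R$. Unfolding the definition of $\vee$ applied to the eliminator $[x_1.u_1,x_2.u_2]$, the desired conclusion $(T\,[x_1.u_1,x_2.u_2])\in S$, which is precisely the statement of the theorem once the definition of $S$ is unpacked, reduces to verifying that $u_1[x_1:=r]\in S$ for every $r\in R\to S$ and $u_2[x_2:=s]\in S$ for every $s\in R$.

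The main obstacle is these two verifications. For $r\in R\to S$ (so $(r\,\bar{t_j})\in S$ for every $j$), take $\te_1^1:=r$ so that $u_1[x_1:=r]\rd \sou{\m}.u_1[x_1:=\te_1^1]$ holds trivially, and then extract the remaining $\te_j^i$ for $j\ge 2$ together with a global $m$ from the $S$-membership of $(r\,\bar{t_1})$, shifting indices by one. A symmetric construction with $\te_1^2:=s$ and $(s\,\bar{t_1})\in S$ handles the case of $s\in R$. The subtle point is that at each step the definition of $S$ requires both $\te_j^1$ and $\te_j^2$ to be supplied with matched reduction behaviour, so the inductive construction must keep the two branches synchronised; this can be arranged by choosing companion witnesses symmetrically, exploiting the universal quantification over the $v_j$'s that is built into the definition of $S$.
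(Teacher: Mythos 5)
Your overall strategy coincides with the paper's: the same $\m$-saturated set $S$ encoding the reduction pattern, essentially the same $R$ (the paper writes $R=\{\bar{t_1},\dots,\bar{t_n}\}\to S$ with an unbound $n$; your infinite version $\{\bar{t_j}\ /\ j\ge 1\}\to S$ is in fact what the argument needs, since the proof uses $(\te\,\bar{t_j})\in S$ for \emph{all} $j$), the same model, the same appeal to Theorem~\ref{closed}, and the same reduction of the goal to the two premises of the $\vee$ clause: $u_1[x_1:=r]\in S$ for every $r\in R\to S$ and $u_2[x_2:=s]\in S$ for every $s\in R$.

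The problem is in how you discharge these two premises. You assert that $r\in R\to S$ yields $(r\,\bar{t_j})\in S$ for every $j$; that is the membership condition for $R$, not for $R\to S$. What $r\in R\to S$ actually yields is $(r\,v)\in S$ for every $v\in R$, and the step the paper relies on --- which is absent from your argument --- is that \emph{every} term $v$ belongs to $R$, because $(v\,\bar{t_q})$ is already of the terminal shape $\sou{\m}.(v_p\,\bar{t_q})$ admitted by the definition of $S$; hence $(r\,v)\in S$ for an \emph{arbitrary} term $v$, which is exactly the clause the continuation substituted for the $\neg P$-variable must satisfy, while the stack-application property $(\cdot\;\bar{t_j})\in S$ is the one enjoyed by the witness substituted for the $P$-variable. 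In your write-up the two branches are crossed, and the false implication is then compounded by the ``index shifting'': the $S$-membership of $(r\,\bar{t_1})$ (even granting it) produces a chain of continuations tied again to $\bar{t_1},\bar{t_2},\dots$, not to the shifted tail, and the synchronisation difficulty you flag in your last sentence is precisely what would have to be proved rather than declared arrangeable. No recursive extraction is needed at all: once one has $(\te\,\bar{t_j})\in S$ for $\te\in R$ and $(\te'\,v)\in S$ for $\te'\in R\to S$ and arbitrary $v$, these are verbatim the conditions the definition of $S$ imposes on the first-stage continuations, so $u_1[x_1:=\te']\in S$ and $u_2[x_2:=\te]\in S$ follow directly, and the theorem with them.
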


\begin{proof} Let $u_1$, $u_2$ be terms and $(\bar{t_n})_{n \ge 1 
}$ a sequence of $\mathcal{E}^{<\omega}$.  Take then $S=\{t$ /
$\exists m\ge 0, \exists \te_1^i,...,\te_m^i$ $ 1\le i \le 2$ : $t \rd
\sou{\m}. u_i[x_i:=\te_1^i],$ $(\te_j^1\; \bar{t_j}) \rd
\sou{\m}. u_i[x_i:=\te_{j+1}^i]$ for all $1\le j
\le m-1$,
 $(\te_j^2\; v_j) \rd \sou{\m}. u_i[x_i:=\te_{j+1}^i]$ for all $1\le j
 \le m-1$, $(\te_m^1 \;\bar{t_m}) \rd \sou{\m}.v_p (\bar{t_q})$ for
 certain $(1\le p \le m$ and $1\le q \le m)$ and $(\te_m^2 \;v_m) \rd
 \sou{\m}.( v_p\; \bar{t_q})$ for certain $(1\le p \le m$ and $1\le
 q \le m) \}$.  $R=\{\bar{t_1},...,\bar{t_n}\} \to S $.  By
 definition $S$ is a $\mu$-saturated set. Let $\mathcal{M}=\langle
 S;R\rangle$ and an $\mathcal{M}$-interpretation $I$ such that $I(P)=
 R$.  By the theorem \ref{closed}, $T \in [R \to S] \vee R$.  Let $\te
 \in R$, then, for all $i$, $(\te\; \bar{t_i}) \in S$. Let $\te' \in R
 \to S$, hence $(\te'\; v_i) \in S$ since $v_i \in R$ (because $(v_i\;
 \bar{t_i}) \in S$), therefore $(T\; [x_1.u_1,x_2.u_2]) \in S$.
\end{proof}

\end{document}